\numberwithin{equation}{section} \setlength{\textwidth}{16cm}
\newtheorem{theorem}{Theorem}[section]
\newtheorem{corollary}[theorem]{Corollary}
\newtheorem{lemma}[theorem]{Lemma}
\theoremstyle{definition}
\newtheorem{definition}[theorem]{Definition}
\theoremstyle{remark}
\newtheorem{remark}[theorem]{Remark}
\numberwithin{equation}{section}
\thanks{* Corresponding author, Email: gmsmoorthy@yahoo.com}
\begin{document}
\title [Subclasses of Bi-Univalent Functions... ]{Subclasses
of Bi-Univalent Functions Defined by \\ S\u al\u agean type $q-$
Difference Operator  }
\author{G. Murugusundaramoorthy$^{1,*}$ and  K. VIJAYA$^{2}$, }
\maketitle
\begin{center}
$^{*}$ Corresponding Author \\$^{1,2}$School of Advanced Sciences,\\ VIT University, \\ Vellore -
632014, India.\\ {\bf E-mail:~~{\it gmsmoorthy@yahoo.com,kvijaya@vit.ac.in,}}
\end{center}

\begin{abstract}
In this paper, we introduce and investigate a new subclass of the function
class $\Sigma$ of bi-univalent functions  defined in the open unit disk, which are associated with the  S\u al\u agean type $q-$ difference operator and satisfy some subordination conditions.
Furthermore, we find estimates on the Taylor-Maclaurin
coefficients $|a_2|$ and $|a_3|$ for functions in the new subclass introduced here. Several (known or new) consequences of the results are also pointed out. Further we obtain Fekete-Szeg$\ddot{o}$ inequality for the new function class.
\\
2010 Mathematics Subject Classification: 30C45,30C50,30C80 \
\\
{\it Keywords and Phrases}: Univalent,  bi-univalent, starlike and
convex functions, coefficient bounds,Fekete-Szeg$\ddot{o}$ inequality .
\end{abstract}
\maketitle
\section{Introduction} Let $\mathcal{A}$ denote the class of analytic functions of the form
\begin{equation}\label{c7e1}
f(z)=z+\sum\limits_{n=2}^{\infty}a_nz^n
\end{equation}
normalized by the conditions $f(0) = 0 = f'(0) - 1$ defined in the open unit disk $$\triangle = \{  z \in \mathbb{C} : |z| < 1  \}.$$ Let $\mathcal{S}$ be the subclass
of $\mathcal{A}$ consisting of functions of the form \eqref{c7e1} which are also
univalent in $\triangle.$ Let $\mathcal{S}^{*}(\alpha)$ and $\mathcal{K}(\alpha)$ denote the subclasses of  $\mathcal{S},$
consisting of starlike and convex functions of order $\alpha,\ 0 \leq \alpha < 1,$ respectively.
An analytic function $f$ is subordinate to an analytic function $g,$ written $f(z) \prec g(z),$
provided there is an analytic function $w$ defined on $\triangle$ with $w(0) = 0$ and
$|w(z)|< 1 $ satisfying $f(z) = g(w(z)).$ Ma and Minda \cite{ma} unified various subclasses
of starlike and convex functions for which either of the quantities $$\frac{z\ f'(z)}{f(z)}\quad  {\rm or}
\quad 1 +  \frac{z\ f''(z)}{f'(z)}$$ is subordinate to a more general superordinate function.
For this purpose, they considered an analytic function $\varphi$ with positive real part in
the unit disk $\triangle,~~ \varphi(0) = 1,~~ \varphi'(0) > 0 $ and $\varphi$ maps $\triangle$
onto a region starlike with respect to 1 and symmetric with respect to the real axis.
In the sequel, it is assumed that such a function has a series expansion of the form
\begin{equation}\label{c7e3}
\phi(z) = 1 + B_{1} z + B_{2} z^2 + B_{3} z^3 + \cdots,\ \ \ (B_{1}> 0).
\end{equation}
In particular for the class of strongly starlike functions of order $\alpha(0< \alpha \leq 1),$ the function $\phi$ is given by
\begin{equation}\label{phi01}
\phi(z) = \left( \frac{1 + z}{1 - z} \right)^{\alpha} = 1 + 2 \alpha z + 2 \alpha^2 z^2 + \cdots\quad
(0 < \alpha \leq 1),
\end{equation}
which gives $B_{1} = 2 \alpha~~{\rm  and }~~B_{2} = 2 \alpha^2$ and
on the other hand, for the class of  starlike functions of order $\beta(0\leq \beta < 1),$
\begin{equation}\label{phi02}
\phi(z) =  \frac{1 + (1 - 2 \beta) z}{1 - z}  = 1 + 2 (1 - \beta) z + 2 (1 - \beta) z^2 + \cdots\quad
(0 \leq  \beta < 1),
\end{equation}
we have$B_{1} = B_{2} = 2(1 - \beta).$
\\
\par The Koebe one quarter theorem \cite{duren} ensures that the image of $\triangle$ under
every univalent function $f \in \mathcal{A}$ contains a disk of radius $\frac{1}{4}.$
Thus every univalent function $f$ has an inverse $f^{-1}$ satisfying $f^{-1}(f(z)) = z,
\ (z \in \triangle)$ and $f(f^{-1}(w)) = w\ (|w| < r_{0}(f), \ r_{0}(f)\geq \frac{1}{4}).$
A function $f \in \mathcal{A}$ is said to be bi-univalent in $\triangle$ if both
$f$ and $f^{-1}$ are univalent in $\triangle.$ Let $\Sigma$ denote the class of
bi-univalent functions defined in the unit disk $\triangle.$ Since $f \in \Sigma$
has the Maclaurian series given by \eqref{c7e1}, a computation shows that its inverse
$g = f^{-1}$ has the expansion
\begin{equation}\label{c7e2}
g(w) = f^{-1}(w) = w - a_{2} w^2 + (2a_{2}^2 - a_{3})w^3 + \cdots.
\end{equation}Several authors have  introduced and investigated subclasses of bi-univalent
functions  and obtained bounds for the initial coefficients (see \cite{bran-clu,bran-saha,DEN,lewin,hya,srivastava,xu1,xi}).
\par Quantum calculus is ordinary classical calculus without the notion of limits.It
defines \textit{q-calculus} and\textit{h-calculus}. Here $h$ ostensibly stands for Planckís constant,
while $q$ stands for quantum. Recently, the area of \textit{q-calculus} has attracted the
serious attention of researchers. This great interest is due to its application in
various branches of mathematics and physics.The application of \textit{q-calculus} was
initiated by Jackson\cite{fhjj}. He was the first to develop \textit{q-integral} and \textit{q-derivative} in a
systematic way. Later, geometrical interpretation of \textit{q-analysis} has been recognized
through studies on quantum groups. It also suggests a relation between integrable
systems and \textit{q-analysis}.A comprehensive study on applications of \textit{q-calculus} in
operator theory may be found in\cite{aga}. For the convenience, we provide
some basic definitions and concept details of \textit{q-calculus}
which are used in this paper. 
\par For $0<q<1$  the Jackson's \textit{q-derivative}  of a function
$f\in \mathcal{A}$ is, by definition,  given as follows \cite{fhjj}
\begin{equation}\label{in5}
\mathcal{D}_{q}f(z) = \left\{\begin{array}{lcl}\dfrac{f(z) - f(qz)}{(1 -
q)z}&for &z \neq 0,\\ f'(0) &for& z=0,\end{array}\right.
\end{equation}
and$ \quad \mathcal{D}^{2}_{q}f(z) = \mathcal{D}_{q}(\mathcal{D}_{q}f(z)).$ From (\ref{in5}), we have
\begin{equation}\label{dqf}
\mathcal{D}_{q}f(z)=1+\sum\limits_{n=2}^{\infty}[n]_qa_{n}z^{n-1}
\end{equation}
where
\begin{equation}\label{in6}
[n]_q= \frac{1 - q^{n}}{1 - q},
\end{equation}
is sometimes called \textit{the basic number} $n$. If
$q\rightarrow1^{-}, [n]\rightarrow n$.For a function $h(z) =
z^{n},$ we obtain
$\mathcal{D}_{q}h(z)=\mathcal{D}_{q}z^{n}=\frac{1 - q^{n}}{1 - q}z^{n-1}=[n]z^{n-1},$ and
$\lim_{q\rightarrow 1^-}\mathcal{D}_{q}h(z)=\lim_{q\rightarrow 1^-}\left([n]z^{n-1}\right)= nz^{n-1} = h'(z), $
where $h'$ is the ordinary derivative.
Recently for $f\in \mathcal{A},$  Govindaraj and Sivasubramanian  \cite{siva} defined and discussed the S\u al\u agean \textit{q-differential operator} as given below:
 \begin{eqnarray}\label{dk}
 \mathcal{D}^0_q f(z)&=& f(z)\nonumber \\
  \mathcal{D}^1_q f(z)&=&z  \mathcal{D}_q f(z)\nonumber \\
  \mathcal{D}^k_q f(z)&=&z \mathcal{D}^k_q( \mathcal{D}_q^{k-1} f(z))\nonumber \\
    \mathcal{D}_q^k f(z)&=&z+ \sum\limits_{n=2}^{\infty}[n]^k_q a_nz^n\quad (k\in \mathbb{N}_0,z\in\Delta)
 \end{eqnarray}
We note that $\lim_q\rightarrow 1^-$
\begin{equation}\label{gsala}
   \mathcal{D}^k f(z)=z+ \sum\limits_{n=2}^{\infty}n^k a_nz^n\quad (k\in \mathbb{N}_0,z\in\Delta)
\end{equation} the familiar S\u al\u agean derivative\cite{salagean}.
\par In this paper, making use of
the S\u al\u agean \textit{q-differential operator}, for functions $g$ of the form (\ref{c7e2}) we  define
\begin{equation}\label{c7e2a}
\mathcal{D}^k g(w)  = w - a_{2}[2]_q^k w^2 + (2a_{2}^2 - a_{3})[3]_q^k w^3 + \cdots
\end{equation}
and introduce two new subclass of bi-univalent functions to obtain
the estimates on the coefficients $|a_{2}|$ and $|a_{3}|$ by Ma-Minda subordination.Further by using the  initial coefficient values  of $a_{2}$ and $a_{3}$ we also obtain Fekete-Szeg$\ddot{o}$ inequalities.

\section{Bi-Univalent function class $\mathcal{M}{\Sigma}_q^{k}(\lambda, \phi)$ }
\par In this section,  due to Vijaya et al \cite{gms} ,we introduce a subclass $\mathcal{M}{\Sigma}_q^{k}(\lambda, \phi)$ of
${\Sigma}$ and find estimate on the coefficients
$|a_{2}|$ and $|a_{3}|$ for the functions in this new subclass,  by subordination. Throughout our study, unless otherwise stated, we let
$$0\leq \lambda \leq 1;\quad 0 < q < 1; \quad  k\in \mathbb{N}_0 $$
\begin{definition}\label{c7def3.1}
For $0\leq \lambda \leq 1,$ a  function $f \in \Sigma$  of the form (\ref{c7e1}) is said to be in the class $\mathcal{M}{\Sigma}_q^{k}(\lambda, \phi)$
if the following subordination hold:
\begin{equation}\label{c7e3.1}
(1 - \lambda) \frac{\mathcal{D}_q^{k + 1} f(z)} { \mathcal{D}_q^{k} f(z)}+ \lambda \frac{\mathcal{D}_q^{k + 2} f(z)}{\mathcal{D}_q^{k + 1} f(z)} \prec \phi(z)
\end{equation}
and
\begin{equation}\label{c7e3.2}
(1 - \lambda) \frac{\mathcal{D}_q^{k + 1} g(w)} { \mathcal{D}_q^{k} g(w)}+ \lambda \frac{\mathcal{D}_q^{k + 2} g(w)}{\mathcal{D}_q^{k + 1} g(w)}  \prec \phi(w),
\quad
\end{equation}where $z,w \in \Delta$ and $g$ is given by (\ref {c7e2}).
\end{definition}

\begin{remark}\label{c7rem3.2}Suppose $f\in\Sigma.$ If $\lambda = 0,$ then
$\mathcal{M}{\Sigma}_q^{k}(\lambda, \phi)\equiv\mathcal{S}{\Sigma}_q^{k}(\phi):$ thus
 $f\in \mathcal{S}{\Sigma}_q^{k}(\phi)$ if the following subordination holds:
\begin{equation*}
\frac{\mathcal{D}_q^{k + 1} f(z)}{\mathcal{D}_q^{k} f(z)} \prec \phi(z)\qquad {\rm and} \qquad
\frac{\mathcal{D}_q^{k + 1} g(w)}{\mathcal{D}_q^{k} g(w)} \prec \phi(w),
\end{equation*}
where $z,w \in \Delta$ and $g$ is given by (\ref {c7e2}).
\end{remark}

\begin{remark}\label{c7rem3.3}
Suppose $f\in\Sigma.$  If $\lambda = 1,$ then
$\mathcal{M}{\Sigma}_q^{k}(\lambda, \phi)\equiv\mathcal{K}{\Sigma}_q^{k}(\phi):$ thus
 $f\in \mathcal{K}{\Sigma}_q^{k}(\phi)$ if the following subordination holds:
\begin{equation*}
\frac{\mathcal{D}_q^{k + 2} f(z)}{\mathcal{D}_q^{k + 1} f(z)} \prec \phi(z)\qquad {\rm and} \qquad
\frac{\mathcal{D}_q^{k + 2} g(w)}{\mathcal{D}_q^{k + 1} g(w)} \prec \phi(w),
\end{equation*}
where $z,w \in \Delta$ and $g$ is given by (\ref {c7e2}).
\end{remark}
\begin{remark}\label{gms3.3}
For $0\leq \lambda \leq 1~ {\rm and}~ k=0$ a  function $f \in \Sigma$  of the form (\ref{c7e1}) is said to be in the class $\mathcal{M}{\Sigma}_q^{k}(\lambda, \phi)$
if the following subordination hold:
\begin{equation}\label{gms3.1}
(1 - \lambda) \frac{z\mathcal{D}_q f(z)} {f(z)}+ \lambda \frac{\mathcal{D}_q(z\mathcal{D}_qf(z)}{\mathcal{D}_qf(z))} \prec \phi(z)
\end{equation}
and
\begin{equation}\label{gms3.2}
(1 - \lambda) \frac{z\mathcal{D}_q g(w)} {g(w)}+ \lambda \frac{\mathcal{D}_q(w\mathcal{D}_qg(w)}{\mathcal{D}_qg(w))}  \prec \phi(w),
\quad
\end{equation}where $z,w \in \Delta$ and $g$ is given by (\ref {c7e2}).

\end{remark}
\par It is of interest to  note that $\mathcal{M}{\Sigma}_q^{0}(0, \phi) = \mathcal{S}{\Sigma}_q^{*}(\phi),$~~$ \mathcal{M}{\Sigma}_q^{0}(1, \phi)
= \mathcal{K}{\Sigma}_q(\phi)$ new subclasses of $\Sigma$ associated with $q-$ difference operator not yet discussed sofar.
\par In order to prove our main results, we require the following Lemma:

\begin{lemma}\cite{pomm}\label{c7lem1.2}
{\it If a function $p \in \mathcal{P}$ is given by
$$p(z) = 1 + p_{1} z + p_{2} z^2 + \cdots\qquad (z \in \Delta),$$
then $$|p_{i}|\leqq 2\qquad (i\in \mathbb{N}),$$ where $\mathcal{P}$ is the family
of all functions $p,$ analytic in $\Delta,$ for which
$$p(0)=1 \qquad \textit{and} \qquad \Re\big( p(z) \big) > 0\qquad (z \in \Delta).$$}

\end{lemma}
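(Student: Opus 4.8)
The statement is the classical Carath\'eodory coefficient bound, so the plan is to exploit the positivity of $\Re(p)$ directly rather than to manipulate the Taylor series algebraically. First I would set $u(z) = \Re\big(p(z)\big)$, a positive harmonic function on $\triangle$ with $u(0) = \Re\big(p(0)\big) = 1$. Fixing a radius $r$ with $0 < r < 1$, I would recover the coefficient $p_n$ (for $n \geq 1$) from the boundary values of $u$ on the circle $|z| = r$. Writing $p(re^{i\theta}) = \sum_{m \geq 0} p_m r^m e^{im\theta}$ and using $u = \tfrac12(p + \bar p)$, the orthogonality of the exponentials $\{e^{im\theta}\}$ on $[0,2\pi]$ gives the Fourier-coefficient identity
\[
p_n\, r^n = \frac{1}{\pi}\int_0^{2\pi} u(re^{i\theta})\, e^{-in\theta}\, d\theta \qquad (n \geq 1),
\]
since the antiholomorphic part $\tfrac12\sum \bar p_m r^m e^{-im\theta}$ contributes nothing against $e^{-in\theta}$ for $n \geq 1$.

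Next I would estimate the right-hand side using the two facts that actually drive the bound: $u \geq 0$ and the mean-value property of harmonic functions. Taking absolute values and using $|e^{-in\theta}| = 1$ together with $u \geq 0$,
\[
|p_n|\, r^n \leq \frac{1}{\pi}\int_0^{2\pi} u(re^{i\theta})\, d\theta = 2\cdot\frac{1}{2\pi}\int_0^{2\pi} u(re^{i\theta})\, d\theta = 2\,u(0) = 2,
\]
where the middle equality is the mean-value property and $u(0) = 1$. Since this holds for every $r \in (0,1)$, letting $r \to 1^-$ yields $|p_n| \leq 2$ for all $n \in \mathbb{N}$, which is the assertion of the lemma.

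The individual steps are routine once the integral identity is in place, and the only point needing care is the term-by-term integration used to isolate $p_n$; this is legitimate because $p$ is analytic on a neighbourhood of the closed disk $\{|z| \leq r\}$, so its Taylor series converges uniformly on $|z| = r$ and may be integrated against $e^{-in\theta}$ term by term. I would regard this interchange, rather than any genuine inequality, as the only real obstacle. As an alternative one could invoke the Herglotz representation $p(z) = \int_{|\zeta| = 1} \frac{\zeta + z}{\zeta - z}\, d\mu(\zeta)$ for a probability measure $\mu$ on the unit circle; expanding the kernel as $1 + 2\sum_{n \geq 1} \zeta^{-n} z^n$ gives $p_n = 2\int \zeta^{-n}\, d\mu(\zeta)$ and hence $|p_n| \leq 2\int d\mu = 2$ immediately. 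Either route settles the lemma, and I would present the first, since it needs only the Poisson/mean-value machinery and no representation theorem.
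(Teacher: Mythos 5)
Your proof is correct. The paper itself gives no argument for this lemma --- it is quoted directly from Pommerenke's book \cite{pomm} as a known result (Carath\'eodory's lemma) --- and your Fourier-coefficient argument, namely the identity $p_n r^n = \tfrac{1}{\pi}\int_0^{2\pi} \Re\big(p(re^{i\theta})\big) e^{-in\theta}\, d\theta$ combined with the positivity of $\Re(p)$ and the mean-value property, is precisely the standard proof given in that reference, so there is nothing to fault: the term-by-term integration is justified exactly as you say, and the Herglotz alternative you sketch is equally valid.
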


\begin{theorem}\label{c7thm3.4}
Let $f$ given by \eqref{c7e1} be in the class $\mathcal{M}{\Sigma}_q^{k}(\lambda, \phi).$ Then
\begin{equation}\label{c7e3.3}
|a_{2}| \leq \frac{ B_{1} \sqrt{B_{1}} } { \sqrt{|(2(1+2\lambda) [3]_q^k - (1 + 3\lambda)[2]_q^{2 k}) B_{1}^2
+ (1 + \lambda)^2 (B_{1} - B_{2})[2]_q^{2 k} |} }
\end{equation}
and
\begin{equation}\label{c7e3.4}
|a_{3}| \leq   \frac{B_{1}}{2(1 + 2 \lambda) [3]_q^{k}}+ \left(\frac{B_{1}}{(1 + \lambda)[2]_q^{k}}\right)^2
\end{equation}where $0\leq \lambda \leq 1.$
\end{theorem}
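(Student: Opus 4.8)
The plan is to run the standard Ma--Minda bi-univalent scheme. Because $f\in\mathcal{M}{\Sigma}_q^{k}(\lambda,\phi)$, the subordinations \eqref{c7e3.1} and \eqref{c7e3.2} supply Schwarz functions $u,v$, analytic on $\triangle$ with $u(0)=v(0)=0$ and $|u|,|v|<1$, for which the two left-hand sides equal $\phi(u(z))$ and $\phi(v(w))$ respectively. I would immediately pass to the Carath\'eodory class $\mathcal{P}$ by writing $p(z)=\frac{1+u(z)}{1-u(z)}=1+p_1z+p_2z^2+\cdots$ and $s(w)=\frac{1+v(w)}{1-v(w)}=1+s_1w+s_2w^2+\cdots$, so that $p,s\in\mathcal{P}$ and Lemma \ref{c7lem1.2} yields $|p_i|,|s_i|\le2$. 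Substituting $u=(p-1)/(p+1)$ and $v=(s-1)/(s+1)$ into the expansion \eqref{c7e3} of $\phi$, the two right-hand sides become
\[ 1+\tfrac12 B_1 p_1\,z+\Big(\tfrac12 B_1 p_2+\tfrac14(B_2-B_1)p_1^2\Big)z^2+\cdots \]
and the same expression in $w$ with $s_i$ replacing $p_i$.

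Next I would expand the left-hand sides. From \eqref{dk} one has $\mathcal{D}_q^{k}f=z+[2]_q^k a_2z^2+[3]_q^k a_3z^3+\cdots$, with the factors $[2]_q^{k+1},[3]_q^{k+1}$ and $[2]_q^{k+2},[3]_q^{k+2}$ appearing in the series for $\mathcal{D}_q^{k+1}f$ and $\mathcal{D}_q^{k+2}f$. Expanding the quotients $\mathcal{D}_q^{k+1}f/\mathcal{D}_q^{k}f$ and $\mathcal{D}_q^{k+2}f/\mathcal{D}_q^{k+1}f$ to order $z^2$ and forming the $(1-\lambda),\lambda$ combination gives a series $1+c_1z+c_2z^2+\cdots$ in which $c_1$ is linear in $a_2$ and $c_2=\alpha a_2^2+\beta a_3$; the aim is to verify that $c_1=(1+\lambda)[2]_q^k a_2$, that $\beta=2(1+2\lambda)[3]_q^k$, and that $\alpha=-(1+3\lambda)[2]_q^{2k}$. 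For $g=f^{-1}$ I use \eqref{c7e2a}, whose coefficients are $-a_2$ and $2a_2^2-a_3$, so the corresponding series has leading coefficient $-c_1$ and second coefficient $\alpha a_2^2+\beta(2a_2^2-a_3)$. Matching the four coefficients against the two $\phi$-expansions produces the governing system.

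The remainder is algebra. The two first-order equations give $(1+\lambda)[2]_q^k a_2=\tfrac12 B_1 p_1=-\tfrac12 B_1 s_1$, so $p_1=-s_1$ and $p_1^2=s_1^2$. Adding the two second-order equations cancels the $a_3$-terms and, after eliminating $p_1^2$ through the first-order relation, isolates
\[ a_2^2=\frac{B_1^3(p_2+s_2)}{4\big[\big(2(1+2\lambda)[3]_q^k-(1+3\lambda)[2]_q^{2k}\big)B_1^2+(1+\lambda)^2(B_1-B_2)[2]_q^{2k}\big]}; \]
the bound \eqref{c7e3.3} then follows from $|p_2+s_2|\le4$. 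Subtracting the same two equations (the $p_1^2-s_1^2$ term now vanishing) gives $a_3-a_2^2=\dfrac{B_1(p_2-s_2)}{8(1+2\lambda)[3]_q^k}$; estimating $|a_3|\le|a_2|^2+|a_3-a_2^2|$, where $|a_2|^2\le\big(B_1/((1+\lambda)[2]_q^k)\big)^2$ comes directly from the first-order equation with $|p_1|\le2$ and the difference term is controlled by $|p_2|,|s_2|\le2$, delivers \eqref{c7e3.4}.

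The main obstacle I anticipate is the bookkeeping in the quotient expansions: correctly propagating the basic numbers $[2]_q$ and $[3]_q$ through the two ratios and verifying that, after the $(1-\lambda),\lambda$ weighting and the combination with the inverse coefficients $-a_2,\,2a_2^2-a_3$, the $a_2^2$-coefficient in the summed equation collapses to exactly $2(1+2\lambda)[3]_q^k-(1+3\lambda)[2]_q^{2k}$. Once those constants are pinned down, the estimates are a routine application of Lemma \ref{c7lem1.2}; letting $q\to1^-$ (so $[n]_q\to n$) reduces the bounds to the known S\u al\u agean-operator case, which I would use as a consistency check throughout.
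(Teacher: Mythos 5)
Your proposal is correct and follows essentially the same route as the paper: converting the Schwarz functions to Carath\'eodory functions, matching coefficients to obtain the same four relations (with exactly the constants $(1+\lambda)[2]_q^k$, $-(1+3\lambda)[2]_q^{2k}$, $2(1+2\lambda)[3]_q^k$ you state as targets), adding to isolate $a_2^2$, subtracting to isolate $a_3-a_2^2$, and invoking Lemma \ref{c7lem1.2}. Your final step, bounding $|a_3|\le |a_2|^2+|a_3-a_2^2|$ via the linear equation, is just a cosmetic reorganization of the paper's substitution of its relation $8(1+\lambda)^2[2]_q^{2k}a_2^2=B_1^2(p_1^2+q_1^2)$ into the expression for $a_3$, and yields the identical bound.
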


\begin{proof}
Let $f \in \mathcal{M}{\Sigma}_q^{k}(\lambda, \phi)$ and $g = f^{-1}.$ Then there are analytic
functions $u, v : \triangle \longrightarrow \triangle,$ with $u(0) = 0 = v(0),$ satisfying
\begin{equation}\label{c7e2.3}
(1 - \lambda) \frac{\mathcal{D}_q^{k + 1} f(z)} { \mathcal{D}_q^{k} f(z)}+ \lambda \frac{\mathcal{D}_q^{k + 2} f(z)}{\mathcal{D}_q^{k + 1} f(z)} = \phi(u(z))
\end{equation}
and
\begin{equation}\label{c7e2.4}
(1 - \lambda) \frac{\mathcal{D}_q^{k + 1} g(w)} { \mathcal{D}_q^{k} g(w)}+ \lambda \frac{\mathcal{D}_q^{k + 2} g(w)}{\mathcal{D}_q^{k + 1} g(w)} = \phi(v(w)).
\end{equation}
Define the functions $p(z)$ and $q(z)$ by
\begin{equation*}
p(z) : = \frac{1 + u(z)}{1 - u(z)} = 1 + p_{1} z + p_{2} z^2 + \cdots
\end{equation*}
and
\begin{equation*}
q(z) : = \frac{1 + v(z)}{1 - v(z)} = 1 + q_{1} z + q_{2} z^2 + \cdots
\end{equation*}
or, equivalently,
\begin{equation}\label{c7e2.5}
u(z) : = \frac{p(z) - 1}{p(z) + 1} = \frac{1}{2}\left[ p_{1} z + \left(p_{2} - \frac{p_{1}^2}{2}\right) z^2 + \cdots \right]
\end{equation}
and
\begin{equation}\label{c7e2.6}
v(z) : = \frac{q(z) - 1}{q(z) + 1} = \frac{1}{2}\left[ q_{1} z + \left(q_{2} - \frac{q_{1}^2}{2}\right) z^2 + \cdots \right].
\end{equation}
Then $p(z)$ and $q(z)$ are analytic in $\triangle$ with $p(0) = 1 = q(0).$ Since $u, v : \triangle \rightarrow \triangle,$
the functions $p(z)$ and $q(z)$ have a positive real part in $\triangle,$ $|p_{i}| \leq 2$ and $|q_{i}| \leq 2.$
\\
\par Using \eqref{c7e2.5} and \eqref{c7e2.6} in \eqref{c7e2.3} and \eqref{c7e2.4} respectively, we have
\begin{equation}\label{c7e2.7}
(1 - \lambda) \frac{\mathcal{D}_q^{k + 1} f(z)} { \mathcal{D}_q^{k} f(z)}+ \lambda \frac{\mathcal{D}_q^{k + 2} f(z)}{\mathcal{D}_q^{k + 1} f(z)} = \phi\left( \frac{1}{2}\left[ p_{1} z + \left(p_{2} - \frac{p_{1}^2}{2}\right) z^2 + \cdots \right] \right)
\end{equation}
and
\begin{equation}\label{c7e2.8}
(1 - \lambda) \frac{\mathcal{D}_q^{k + 1} g(w)} { \mathcal{D}_q^{k} g(w)}+ \lambda \frac{\mathcal{D}_q^{k + 2} g(w)}{\mathcal{D}_q^{k + 1} g(w)} = \phi\left( \frac{1}{2}\left[ q_{1} w + \left(q_{2} - \frac{q_{1}^2}{2}\right) w^2 + \cdots \right] \right).
\end{equation}
In light of \eqref{c7e1} - \eqref{c7e2a}, and  from \eqref{c7e2.7} and \eqref{c7e2.8},
we have
\begin{multline*}
1 + (1 + \lambda)[2]_q^{k} a_{2} z + [2 (1 + 2 \lambda )[3]_q^{k}a_{3} - (1 +3\lambda)[2]_q^{2k}a_{2}^2] z^2 + \cdots\\
= 1 + \frac{1}{2} B_{1} p_{1} z + \left[\frac{1}{2} B_{1} (p_{2} - \frac{p_{1}^2}{2}) + \frac{1}{4} B_{2} p_{1}^2\right] z^2 + \cdots
\qquad \qquad \qquad
\end{multline*}
and
\begin{multline*}
1 - (1 + \lambda)[2]_q^{k}a_{2} w + \{[(8\lambda+4)[3]_q^{k} - (3\lambda +1)[2]_q^{2k})] a_{2}^2-2 (1 + 2 \lambda)[3]_q^{k}a_{3}\} w^2 + \cdots\\
= 1 + \frac{1}{2} B_{1} q_{1} w +\left [\frac{1}{2} B_{1} (q_{2} - \frac{q_{1}^2}{2}) + \frac{1}{4} B_{2} q_{1}^2\right] w^2 + \cdots
\qquad \qquad \qquad
\end{multline*}
which yields the following relations:

\begin{eqnarray}
 (1 + \lambda) [2]_q^{k} a_{2}  &=&  \frac{1}{2} B_{1} p_{1}\label{c7e3.5}\\
 -(1 + 3\lambda) [2]_q^{2 k} a_{2}^2 + 2(1 + 2 \lambda)[3]_q^{k} a_{3} &=& \frac{1}{2} B_{1} (p_{2} - \frac{p_{1}^2}{2}) \nonumber\\ &\qquad +& \frac{1}{4} B_{2} p_{1}^2 \label{c7e3.6}\\
  - (1 + \lambda) [2]_q^{k} a_{2}   &=&  \frac{1}{2} B_{1} q_{1}  \label{c7e3.7}
\end{eqnarray}
and
\begin{equation}\label{c7e3.8}
( 4 (1 + 2 \lambda)[3]_q^{k} -(1 + 3\lambda)[2]_q^{2 k} )a_{2}^2 - 2(1 + 2 \lambda)[3]_q^{k} a_{3} = \frac{1}{2} B_{1} (q_{2} - \frac{q_{1}^2}{2}) + \frac{1}{4} B_{2} q_{1}^2.
\end{equation}
From \eqref{c7e3.5} and \eqref{c7e3.7} it follows that
\begin{equation}\label{c7e3.9}
p_{1} = -q_{1}
\end{equation}
and
\begin{equation}\label{c7e3.10}
8 (1 + \lambda)^2 [2]_q^{2 k } a_{2}^2 = B_{1}^2 (p_{1}^2 + q_{1}^2).
\end{equation}\label{a2}
From \eqref{c7e3.6}, \eqref{c7e3.8} and \eqref{c7e3.10}, we obtain
\begin{equation}\label{a22}
a_{2}^2 = \frac{B_{1}^3 (p_{2} + q_{2}) }{4 [\{ 2(1 + 2 \lambda)[3]_q^{k} - (1 + 3\lambda)[2]_q^{k} \}B_{1}^2
+ (1 + \lambda)^2 (B_{1} - B_{2})[2]_q^{2 k}]}.
\end{equation}
Applying Lemma \ref{c7lem1.2} to the coefficients $p_{2}$ and $q_{2},$ we have
\begin{equation}
|a_{2}| \leq \frac{ B_{1} \sqrt{B_{1}} } { \sqrt{|\{2(1+2\lambda) [3]_q^k - (1 + 3\lambda)[2]_q^{2 k}\} B_{1}^2
+ (1 + \lambda)^2 (B_{1} - B_{2})[2]_q^{2 k}|} }
\end{equation}
By subtracting \eqref{c7e3.8} from \eqref{c7e3.6} and using \eqref{c7e3.9} and \eqref{c7e3.10}, we get
\begin{equation}\label{a3}
a_{3} = \frac{B_{1}^2 (p_{1}^2 + q_{1}^2)}{8 (1 + \lambda)^2 [2]_q^{2 k}} + \frac{B_{1} (p_{2} - q_{2})}{8 (1 + 2 \lambda) [3]_q^{k}}.
\end{equation}
Applying Lemma \ref{c7lem1.2} once again to the coefficients $p_{1}, p_{2}, q_{1}$ and $q_{2},$ we get
\begin{equation}
|a_{3}| \leq   \frac{B_{1}}{2(1 + 2 \lambda)[3]^{k}}+ \left(\frac{B_{1}}{(1 + \lambda)[2]^{k}}\right)^2.
\end{equation}
\end{proof}

\begin{remark}\label{c7rem3.6}
If $f\in \mathcal{M}{\Sigma}^k_q(\lambda, \left(\frac{1 + z}{1 - z} \right)^{\alpha})$ then,
we have the following estimates for the coefficients $|a_2|$
and $|a_3|:$
\begin{equation*}
|a_{2}| \leq \frac{2 \alpha}{\sqrt{| \{2(1 + 2 \lambda)[3]_q^{k} - (1 + 3\lambda)[2]^{2 k}  \}\alpha + (1-\alpha)(1 + \lambda)^2 [2]_q^{2 k} |}}
\end{equation*}
and
\begin{equation*}
|a_{3}| \leq \frac{4 \alpha^2}{(1 + \lambda)^2[2]_q^{2 k}} + \frac{\alpha}{(1 + 2 \lambda)[3]_q^{k}}.
\end{equation*}
For functions $f\in \mathcal{M}{\Sigma}_q^k(\lambda, \frac{1 + (1 - 2 \beta) z}{1 - z}),$  the inequalities \eqref{c7e3.3}
and \eqref{c7e3.4} yields the following estimates
\begin{equation*}
|a_{2}| \leq \sqrt{\frac{2 (1 - \beta)}{| 2 (1 + 2 \lambda)[3]_q^{k} - (1 +3 \lambda)[2]_q^{2 k} |}}
\end{equation*}
and
\begin{equation*}
|a_{3}| \leq \frac{4 (1 - \beta)^2}{(1 + \lambda)^2[2]_q^{2 k}} + \frac{(1 - \beta)}{(1 + 2 \lambda)[3]_q^{k}}.
\end{equation*}\end{remark}
\begin{remark}Consequently, when
$\lambda = 0$ and $\lambda =1$ one has  the estimates for the classes $\mathcal{S}{\Sigma}_q^{k}(\alpha),~~
 \mathcal{S}{\Sigma}_q^{k}(\beta)$ and $\mathcal{K}{\Sigma}^k_q(\alpha),~~ \mathcal{K}{\Sigma}^k_q(\beta)$
respectively.We note that, for  $\lim_{q\rightarrow 1^-}$  and for $k=0$ these estimates coincides with the results stated in \cite{xi}.\end{remark}
From Remark \ref{gms3.3}, Theorem \ref{c7thm3.4}  yields the following corollary.
\begin{corollary}\label{c1}
Let $f$ given by \eqref{c7e1} be in the class $\mathcal{M}{\Sigma}_q(\lambda, \phi).$ Then
\begin{equation}\label{ce1}
|a_{2}| \leq \frac{ B_{1} \sqrt{B_{1}} } { \sqrt{|(2(1+2\lambda) [3]_q - (1 + 3\lambda)[2]_q) B_{1}^2
+ (1 + \lambda)^2 (B_{1} - B_{2})[2]_q |} }
\end{equation}
and
\begin{equation}\label{ce2}
|a_{3}| \leq   \frac{B_{1}}{2(1 + 2 \lambda) [3]_q}+ \left(\frac{B_{1}}{(1 + \lambda)[2]_q}\right)^2
\end{equation}
\end{corollary}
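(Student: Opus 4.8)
The plan is to read off Corollary \ref{c1} as the case $k=0$ of Theorem \ref{c7thm3.4}. By Remark \ref{gms3.3}, membership $f\in\mathcal{M}{\Sigma}_q(\lambda,\phi)$ is governed by the subordinations \eqref{gms3.1} and \eqref{gms3.2}, and these are precisely \eqref{c7e3.1} and \eqref{c7e3.2} evaluated at $k=0$, where $\mathcal{D}_q^0 f=f$, $\mathcal{D}_q^1 f=z\mathcal{D}_q f$ and $\mathcal{D}_q^2 f=z\mathcal{D}_q(z\mathcal{D}_q f)$. Thus the hypotheses of Theorem \ref{c7thm3.4} are in force with $k=0$, and the corollary should follow by substituting this value into the bounds \eqref{c7e3.3} and \eqref{c7e3.4}.

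To make the substitution transparent I would simply rerun the proof of Theorem \ref{c7thm3.4} with the $k=0$ operators supplied by Remark \ref{gms3.3}. First introduce Schwarz functions $u,v$ as in \eqref{c7e2.3}--\eqref{c7e2.4} and the associated functions $p,q$ of positive real part through \eqref{c7e2.5}--\eqref{c7e2.6}, so that $|p_i|\le 2$ and $|q_i|\le 2$ by Lemma \ref{c7lem1.2}. Next expand the left-hand sides of \eqref{gms3.1} and \eqref{gms3.2} as power series and compare them, via \eqref{c7e3}, with the expansions of $\phi(u(z))$ and $\phi(v(w))$. Matching the coefficients of $z,z^2$ and of $w,w^2$ then reproduces, at $k=0$, the four identities \eqref{c7e3.5}--\eqref{c7e3.8}; from \eqref{c7e3.5} and \eqref{c7e3.7} one recovers $p_1=-q_1$ together with the relation \eqref{c7e3.10}, while combining \eqref{c7e3.6} with \eqref{c7e3.8} gives the $k=0$ forms of \eqref{a22} and \eqref{a3}.

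Applying Lemma \ref{c7lem1.2} to $p_2+q_2$ and to $p_1,q_1$ in these two expressions then yields \eqref{ce1} and \eqref{ce2}. The only genuine work lies in the very first of these steps: expanding the $q$-operator ratios $z\mathcal{D}_q f/f$ and $\mathcal{D}_q(z\mathcal{D}_q f)/\mathcal{D}_q f$ as Taylor series, since it is here that the basic numbers $[2]_q$ and $[3]_q$ enter the coefficients, and this bookkeeping is where I would expect the bulk of the care to be needed. Once those coefficient relations are in hand, no idea beyond the proof of Theorem \ref{c7thm3.4} is required, and the estimates follow immediately from the lemma.
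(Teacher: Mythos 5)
Your overall strategy---identify $\mathcal{M}{\Sigma}_q(\lambda,\phi)$ with the $k=0$ class via Remark \ref{gms3.3} and specialize Theorem \ref{c7thm3.4}---is exactly the paper's own derivation (the paper offers nothing beyond that one sentence). But the step you assert without checking, namely that ``the corollary should follow by substituting this value into the bounds \eqref{c7e3.3} and \eqref{c7e3.4},'' fails. At $k=0$ every bracket collapses: $[2]_q^{0}=[2]_q^{2\cdot 0}=[3]_q^{0}=1$, so the specialization of the theorem reads
\[
|a_2|\le \frac{B_1\sqrt{B_1}}{\sqrt{\left|(1+\lambda)B_1^2+(1+\lambda)^2(B_1-B_2)\right|}},
\qquad
|a_3|\le \frac{B_1}{2(1+2\lambda)}+\frac{B_1^2}{(1+\lambda)^2},
\]
with no $q$-dependence whatsoever, whereas the printed bounds \eqref{ce1}--\eqref{ce2} retain $[2]_q$ and $[3]_q$ to the first power. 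No single value of $k$ rescues this: \eqref{ce2} is precisely \eqref{c7e3.4} at $k=1$, but \eqref{ce1} would require simultaneously $[3]_q^{k}=[3]_q$ (so $k=1$) and $[2]_q^{2k}=[2]_q$ (so $k=1/2$). Thus what your argument proves is a different ($q$-free) pair of bounds, not the statement as printed; the inconsistency originates in the paper itself, but a proof must either establish the stated formulas or note that they cannot arise from this substitution.

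Your fallback---rerunning the proof of Theorem \ref{c7thm3.4} with the operators $z\mathcal{D}_qf/f$ and $\mathcal{D}_q(z\mathcal{D}_qf)/\mathcal{D}_qf$ from Remark \ref{gms3.3}---does not close the gap, and your description of it is internally inconsistent: you claim this expansion ``reproduces, at $k=0$, the four identities \eqref{c7e3.5}--\eqref{c7e3.8}'' and at the same time that ``it is here that the basic numbers $[2]_q$ and $[3]_q$ enter the coefficients''; at $k=0$ those identities contain no $q$-brackets at all, so both cannot hold. Moreover, the honest expansion produces neither alternative: writing $[2]_q=1+q$, the coefficient of $z$ on the left-hand side of \eqref{gms3.1} is
\[
(1-\lambda)\bigl([2]_q-1\bigr)a_2+\lambda\,[2]_q\bigl([2]_q-1\bigr)a_2=q\,(1+\lambda q)\,a_2,
\]
which is neither $(1+\lambda)a_2$ (the $k=0$ form of \eqref{c7e3.5}) nor $(1+\lambda)[2]_qa_2$ (what \eqref{ce2} would require). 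So the bookkeeping you correctly flag as ``where the care is needed'' in fact changes the coefficient relations, and carrying it out yields neither the theorem's $k=0$ identities nor the corollary's printed bounds.
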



\par  In the following section due to Frasin and Aouf \cite{BAF-MKA} and Panigarhi and Murugusundaramoorthy \cite{GMS-TP} we define the following new subclass involving the S\u al\u agean operator \cite{salagean}.
\section{Bi-Univalent function class $\mathcal{F}{\Sigma}_q^{k}(\mu, \phi)$ }
\begin{definition}\label{kva3.1}
For $0\leq \mu \leq 1,$ a  function $f \in \Sigma$  of the form (\ref{c7e1}) is said to be in the class $\mathcal{F}{\Sigma}_q^{k}(\mu, \phi)$
if the following subordination hold:
\begin{equation}\label{kva3.1a}
(1 - \mu) \frac{\mathcal{D}_q^{k } f(z)}{z}+ \mu (\mathcal{D}_q^{k } f(z))' \prec \phi(z)
\end{equation}
and
\begin{equation}\label{kva3.2}
(1 - \mu)\frac
{ \mathcal{D}_q^{k } g(w)}{w} + \mu (\mathcal{D}_q^{k } g(w))'\prec \phi(w)
\quad
\end{equation}where $z,w \in \Delta$, $g$ is given by (\ref {c7e2}) and $D^{k }_q f(z)$ is given by (\ref{dk}).
\end{definition}

\begin{remark}\label{kva3.2a} Suppose $f(z)\in\Sigma.$ If $\mu = 0,$ then
$\mathcal{F}{\Sigma}_q^{k}(0, \phi)\equiv\mathcal{H}{\Sigma}_q^{k}(\phi):$ thus,
 $f\in \mathcal{H}{\Sigma}_q^{k}(\phi)$ if the following subordination holds:
\begin{equation*}
\frac{\mathcal{D}_q^{k } f(z)}{z} \prec \phi(z)~~{\rm and}~~ \frac{\mathcal{D}_q^{k}g(w)}{w}  \prec \phi(w)
\end{equation*}where $z,w \in \Delta$ and $g$ is given by (\ref {c7e2}).
\end{remark}
\begin{remark}\label{c7rem3.3}
Suppose $f(z)\in\Sigma.$ If $\mu = 1$, then
$\mathcal{F}{\Sigma}_q^{k}(1, \phi)\equiv\mathcal{P}{\Sigma}_q^{k}(\phi):$ thus,
 $f\in \mathcal{P}{\Sigma}_q^{k}(\phi)$ if the following subordination holds:
\begin{equation*}
(\mathcal{D}_q^{k}f(z))'\prec \phi(z)~~{\rm and}~~ (\mathcal{D}_q^{k} g(w))' \prec \phi(w)
\end{equation*}where $z,w \in \Delta$ and $g$ is given by (\ref {c7e2}).
\end{remark}
\par It is of interest to  note that $\mathcal{F}{\Sigma}_q^{0}(\mu, \phi) = \mathcal{F}{\Sigma}_q(\mu,\phi)$ if the following subordination hold:
\begin{equation}\label{kva3.1af}
(1 - \mu) \frac{f(z)}{z}+ \mu (\mathcal{D}_qf(z)) \prec \phi(z)
\end{equation}
and
\begin{equation}\label{kva3.2f}
(1 - \mu)\frac
{g(w)}{w} + \mu (\mathcal{D}_q g(w))\prec \phi(w)
\quad
\end{equation}where $z,w \in \Delta$, $g$ is given by (\ref {c7e2}) and $D_q^{k } f(z)$ is given by (\ref{dk}).

\begin{theorem}\label{thm3.4}
Let $f$ given by \eqref{c7e1} be in the class $\mathcal{F}{\Sigma}_q^{k}(\mu, \phi).$ Then
\begin{equation}\label{7e3.3}
|a_{2}| \leq \frac{ B_{1} \sqrt{B_{1}} } { \sqrt{|(1+2\mu) [3]_q^k  B_{1}^2
+ (1 + \mu)^2 [2]_q^{2 k} (B_{1} - B_{2})|} }
\end{equation}
and
\begin{equation}\label{7e3.4}
|a_{3}| \leq B_{1} \left(  \frac{B_{1}}{(1 + \mu)^2 [2]_q^{2k}} + \frac{1}{(1 + 2 \mu) [3]_q^{k}}  \right).
\end{equation}
\end{theorem}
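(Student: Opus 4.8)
The plan is to run the argument in exact parallel with the proof of Theorem \ref{c7thm3.4}, since the defining subordinations \eqref{kva3.1a}--\eqref{kva3.2} differ from \eqref{c7e3.1}--\eqref{c7e3.2} only in that the quotient of successive operators is replaced by the affine combination $(1-\mu)\mathcal{D}_q^{k}f(z)/z+\mu(\mathcal{D}_q^{k}f(z))'$. First I would set $g=f^{-1}$ and record, for $f\in\mathcal{F}{\Sigma}_q^{k}(\mu,\phi)$, the existence of analytic $u,v:\Delta\to\Delta$ with $u(0)=v(0)=0$ such that the two left-hand sides equal $\phi(u(z))$ and $\phi(v(w))$. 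Introducing the auxiliary functions $p,q$ of positive real part through \eqref{c7e2.5}--\eqref{c7e2.6}, I would substitute their series into $\phi$ and invoke Lemma \ref{c7lem1.2} to guarantee $|p_i|,|q_i|\le 2$.

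The computational core is a pair of power-series expansions. Using $\mathcal{D}_q^{k}f(z)=z+\sum_{n\ge 2}[n]_q^{k}a_nz^n$ one finds
$$(1-\mu)\frac{\mathcal{D}_q^{k}f(z)}{z}+\mu(\mathcal{D}_q^{k}f(z))'=1+(1+\mu)[2]_q^{k}a_2z+(1+2\mu)[3]_q^{k}a_3z^2+\cdots,$$
and, since $\mathcal{D}_q^{k}g(w)=w-[2]_q^{k}a_2w^2+[3]_q^{k}(2a_2^2-a_3)w^3+\cdots$ by \eqref{c7e2a},
$$(1-\mu)\frac{\mathcal{D}_q^{k}g(w)}{w}+\mu(\mathcal{D}_q^{k}g(w))'=1-(1+\mu)[2]_q^{k}a_2w+(1+2\mu)[3]_q^{k}(2a_2^2-a_3)w^2+\cdots.$$
Comparing these with the $\phi$-expansions yields four scalar identities: the first-order relations $(1+\mu)[2]_q^{k}a_2=\frac{1}{2}B_1p_1$ and $-(1+\mu)[2]_q^{k}a_2=\frac{1}{2}B_1q_1$, together with the two second-order relations governing $a_3$ and $2a_2^2-a_3$.

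From the first-order pair I would deduce $p_1=-q_1$ and $8(1+\mu)^2[2]_q^{2k}a_2^2=B_1^2(p_1^2+q_1^2)$. Adding the two second-order relations eliminates $a_3$ and leaves $2(1+2\mu)[3]_q^{k}a_2^2$ on the left; substituting the previous identity for $p_1^2+q_1^2$ to absorb the $(B_1-B_2)$ contribution, I would solve for $a_2^2$ and then apply $|p_2|,|q_2|\le 2$ to reach \eqref{7e3.3}. For \eqref{7e3.4} I would instead subtract the second-order relations; the $(p_1^2-q_1^2)$ term vanishes because $p_1=-q_1$, giving $2(1+2\mu)[3]_q^{k}(a_3-a_2^2)=\frac{1}{2}B_1(p_2-q_2)$, whence $a_3=a_2^2+B_1(p_2-q_2)/\bigl(4(1+2\mu)[3]_q^{k}\bigr)$; bounding $a_2^2$ via $p_1^2+q_1^2$ and using Lemma \ref{c7lem1.2} once more produces the factored estimate. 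The only step requiring care is the elimination isolating $a_2^2$, where one must track the sign of the $(B_1-B_2)$ term and retain the absolute value in the denominator of \eqref{7e3.3}; no genuine obstacle arises, however, because the affine combination (unlike the quotient in Theorem \ref{c7thm3.4}) contributes no extra $a_2^2$ cross-term, so here the coefficient algebra is in fact simpler.
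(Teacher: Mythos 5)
Your proposal is correct and follows essentially the same route as the paper: the same four coefficient relations, the same elimination giving $a_2^2=\frac{B_1^3(p_2+q_2)}{4[(1+2\mu)[3]_q^k B_1^2+(B_1-B_2)(1+\mu)^2[2]_q^{2k}]}$, and the same subtraction yielding $a_3=a_2^2+\frac{B_1(p_2-q_2)}{4(1+2\mu)[3]_q^k}$ before applying Lemma \ref{c7lem1.2}. In fact you supply the series expansions that the paper compresses into ``proceeding as in the proof of Theorem \ref{c7thm3.4},'' so your write-up is a more detailed version of the paper's own argument.
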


\begin{proof}
Proceeding as in the proof of Theorem \ref{c7thm3.4} we can arrive the following relations.
\begin{eqnarray}
 (1 + \mu) [2]_q^{k} a_{2}  &=&  \frac{1}{2} B_{1} p_{1}\label{7e3.5}\\
 (1 + 2 \mu) [3]_q^{k} a_{3} &=& \frac{1}{2} B_{1} (p_{2} - \frac{p_{1}^2}{2}) + \frac{1}{4} B_{2} p_{1}^2 \label{e3.6}\\
  - (1 + \mu) [2]_q^{k} a_{2}   &=&  \frac{1}{2} B_{1} q_{1}  \label{7e3.7}
\end{eqnarray}
and
\begin{equation}\label{7e3.8}
 2(1 + 2 \mu) [3]_q^{k} a_{2}^2 - (1 + 2 \mu) [3]_q^{k} a_{3} = \frac{1}{2} B_{1} (q_{2} - \frac{q_{1}^2}{2}) + \frac{1}{4} B_{2} q_{1}^2.
\end{equation}
From \eqref{7e3.5} and \eqref{7e3.7} it follows that
\begin{equation}\label{7e3.9}
p_{1} = -q_{1}
\end{equation}
and
\begin{equation}\label{7e3.10}
8(1 + \mu)^2 [2]_q^{2 k} a_{2}^2 = B_{1}^2 (p_{1}^2 + q_{1}^2).
\end{equation}
From \eqref{e3.6}, \eqref{7e3.8} and \eqref{7e3.10}, we obtain
\begin{equation*}
a_{2}^2 = \frac{B_{1}^3 (p_{2} + q_{2}) }{4 [(1 + 2 \mu) [3]_q^{k}B_{1}^2
+ (B_{1} - B_{2})(1 + \mu)^2 [2]_q^{2 k}]}.
\end{equation*}
Applying Lemma \ref{c7lem1.2}~~to the coefficients $p_{2}$ and $q_{2},$ we immediately get the
desired estimate on $|a_{2}|$ as asserted in \eqref{7e3.3}.

By subtracting \eqref{7e3.8} from \eqref{e3.6} and using \eqref{7e3.9} and \eqref{7e3.10}, we get
\begin{equation*}
a_{3} = \frac{B_{1}^2 (p_{1}^2 + q_{1}^2)}{8 (1 + \mu)^2 [2]_q^{2 k}} + \frac{B_{1} (p_{2} - q_{2})}{4 (1 + 2 \mu) [3]_q^{k}}.
\end{equation*}
Applying Lemma \ref{c7lem1.2} to the coefficients $p_{1}, p_{2}, q_{1}$ and $q_{2},$ we get
the desired estimate on $|a_{3}|$ as asserted in \eqref{7e3.4}.
\end{proof}
\begin{remark}Consequently, when
$\mu = 0$ and $\mu =1$  and by taking $\phi$ as in \eqref{phi01} and \eqref{phi02}one can deduce the estimates for the classes $\mathcal{H}{\Sigma}_q^{*}(\alpha), \mathcal{H}{\Sigma}_q^{*}(\beta)$ easily. We note that, for $k = 0$ and $\lim_q\rightarrow 1^-$ these estimates coincides with the results stated in \cite{BAF-MKA,DEN,srivastava}.
\end{remark}
\section{Fekete-Szeg$\ddot{o}$ inequalities }
Making use of the values of $a_2^2$ and $a_3$,  and motivated by the  recent work of Zaprawa \cite{zap} we prove the following Fekete-Szeg$\ddot{o}$ result .
\begin{theorem}\label{t2}
Let the function $f(z)\in\mathcal{M}{\Sigma}_q^{k}(\lambda, \phi)$ and $\tau \in \mathbb{C},$ then
\begin{equation}\label{bi-th2-b-a2-a3}
\qquad  |a_3-\tau a_2^2|\leq \left\{
\begin{array}{ll}
\frac{B_1}{2(1+2\lambda)[3]_q^k},\qquad  & 0\leq
|\Theta(\tau)| <\frac{1}{8(1+2\lambda)[3]_q^k}{,}\quad
\qquad \qquad \quad  \\
&  \\
4 B_1|\Theta(\tau)|, & |\Theta(\tau)| \geq \frac{1}{8(1+2\lambda)[3]_q^k}{,}
\end{array}
\right.
\end{equation}
\end{theorem}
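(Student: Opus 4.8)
The plan is to reuse the coefficient identities already established in the proof of Theorem~\ref{c7thm3.4}, so that no new subordination analysis is needed. The decisive preliminary step is to rewrite $a_3$ in a form that isolates the dependence on $\tau$. Combining the relation \eqref{c7e3.10}, $8(1+\lambda)^2[2]_q^{2k}a_2^2=B_1^2(p_1^2+q_1^2)$, with the formula \eqref{a3} for $a_3$ shows that the first summand of \eqref{a3} equals $a_2^2$, whence $a_3=a_2^2+\dfrac{B_1(p_2-q_2)}{8(1+2\lambda)[3]_q^k}$. Consequently $a_3-\tau a_2^2=(1-\tau)a_2^2+\dfrac{B_1(p_2-q_2)}{8(1+2\lambda)[3]_q^k}$, which confines the entire $\tau$-dependence to the single term $(1-\tau)a_2^2$.

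Next I would substitute the closed form \eqref{a22}, $a_2^2=\dfrac{B_1^3(p_2+q_2)}{D}$ where $D=4\bigl[\{2(1+2\lambda)[3]_q^k-(1+3\lambda)[2]_q^{2k}\}B_1^2+(1+\lambda)^2(B_1-B_2)[2]_q^{2k}\bigr]$, and collect the coefficients of $p_2$ and $q_2$. Introducing the shorthand $\Theta(\tau):=\dfrac{(1-\tau)B_1^2}{D}$ and $M:=\dfrac{1}{8(1+2\lambda)[3]_q^k}$, this gives the representation $a_3-\tau a_2^2=B_1\bigl[(\Theta(\tau)+M)\,p_2+(\Theta(\tau)-M)\,q_2\bigr]$. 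Applying Lemma~\ref{c7lem1.2} (so that $|p_2|\le2$ and $|q_2|\le2$) together with the triangle inequality then yields $|a_3-\tau a_2^2|\le 2B_1\bigl(|\Theta(\tau)+M|+|\Theta(\tau)-M|\bigr)$.

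The proof would conclude by the elementary evaluation $|\Theta(\tau)+M|+|\Theta(\tau)-M|=2\max\{M,|\Theta(\tau)|\}$, which separates the two regimes of \eqref{bi-th2-b-a2-a3}: for $|\Theta(\tau)|<M$ the right-hand side reduces to $4B_1M=\dfrac{B_1}{2(1+2\lambda)[3]_q^k}$, and for $|\Theta(\tau)|\ge M$ it becomes $4B_1|\Theta(\tau)|$. The threshold $M=\frac{1}{8(1+2\lambda)[3]_q^k}$ appearing in the statement is precisely this quantity, confirming that $\Theta(\tau)$ and $M$ are the right bookkeeping variables.

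The step requiring the most care is this final case split. The identity $|\Theta+M|+|\Theta-M|=2\max\{M,|\Theta|\}$ is valid when $\Theta$ is real, whereas for complex $\tau$ the quantity $\Theta(\tau)$ is complex and only the inequality $|\Theta+M|+|\Theta-M|\ge2\max\{M,|\Theta|\}$ holds in general. I would therefore either state the sharp two-case bound under the understanding that $\Theta(\tau)$ is treated as real (equivalently, restricting to real $\tau$), or replace the max-identity by the exact value of the sum of distances from $\Theta(\tau)$ to the foci $\pm M$. Everything else is routine bookkeeping once the reduction $a_3-\tau a_2^2=(1-\tau)a_2^2+\frac{B_1(p_2-q_2)}{8(1+2\lambda)[3]_q^k}$ has been obtained.
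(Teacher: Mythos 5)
Your proposal is correct and is essentially the paper's own proof: the same reduction $a_3=a_2^2+\frac{B_1(p_2-q_2)}{8(1+2\lambda)[3]_q^k}$, the same substitution of \eqref{a22} to obtain $a_3-\tau a_2^2=B_1\left[(\Theta(\tau)+M)p_2+(\Theta(\tau)-M)q_2\right]$ with $M=\frac{1}{8(1+2\lambda)[3]_q^k}$, and the same appeal to Lemma \ref{c7lem1.2} with the case split on $|\Theta(\tau)|$ versus $M$. Your closing caveat is also well founded: the two-case form of \eqref{bi-th2-b-a2-a3} rests on the identity $|\Theta+M|+|\Theta-M|=2\max\{M,|\Theta|\}$, which is valid only for real $\Theta(\tau)$, so for general $\tau\in\mathbb{C}$ the paper's own proof (which makes this step tacitly, and moreover misprints the triangle-inequality bound as $2B_1\left|(\Theta(\tau)+M)+(\Theta(\tau)-M)\right|$) contains exactly the gap you identified --- the defect is in the paper, not in your argument.
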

\begin{proof}
From (\ref{a3})  we have
$
a_{3} = a_2^2 + \frac{B_{1} (p_{2} - q_{2})}{8 (1 + 2 \lambda) [3]_q^{k}}.
$
Using (\ref{a22}),
\begin{multline*}a_3-\tau a_2^2= \frac{B_{1} (p_{2} - q_{2})}{8 (1 + 2 \lambda) [3]_q^{k}}+(1-\tau)a_2^2 \\=\frac{B_{1} (p_{2} - q_{2})}{8 (1 + 2 \lambda) [3]_q^{k}}\\+(1- \tau)\left(
\frac{B_{1}^3 (p_{2} + q_{2}) }{4 [\{ 2(1 + 2 \lambda)[3]_q^{k} - (1 + 3\lambda)[2]_q^{k} \}B_{1}^2
+ (1 + \lambda)^2 (B_{1} - B_{2})[2]_q^{2 k}]}\right)
\end{multline*}
by simple calculation we get
\begin{multline*}
 a_3 - \tau a_2^2 = B_1 \left[ \left(\Theta(\tau) + \frac{1}{8 (1 + 2 \lambda) [3]_q^{k}}\right) p_2
+ \left( \Theta(\tau) - \frac{1}{8 (1 + 2 \lambda) [3]_q^{k}}\right) q_2\right],
\end{multline*}
where
$$ \Theta(\tau) = \frac{ B_1^2(1-\tau)}{4 [\{ 2(1 + 2 \lambda)[3]_q^{k} - (1 + 3\lambda)[2]_q^{k} \}B_{1}^2
+ (1 + \lambda)^2 (B_{1} - B_{2})[2]_q^{2 k}]}.$$
Since all $B_j$ are real and $B_1 > 0$, we have
\begin{equation*}
\qquad  |a_3-\tau a_2^2|\leq 2 B_1 \left| \left(\Theta(\tau) + \frac{1}{8 (1 + 2 \lambda) [3]_q^{k}}\right)
+ \left( \Theta(\tau) - \frac{1}{8 (1 + 2 \lambda) [3]_q^{k}}\right)\right|,
\end{equation*}
which completes the proof.
\end{proof}
   \par Proceeding as in above theorem one can easily prove the following result for $f(z)\in\mathcal{F}{\Sigma}_q^{k}(\mu, \phi)$ hence we state the following without proof.
\begin{theorem}\label{t2f}
Let the function $f(z)\in\mathcal{F}{\Sigma}_q^{k}(\mu, \phi)$ and $\tau \in \mathbb{C},$ then
\begin{equation*}
|a_3-\tau a_2^2|\leq 2 B_1 \left| \left(\Phi(\tau) + \frac{1}{4 (1 + 2 \mu) [3]_q^{k}}\right)
+ \left( \Phi(\tau) - \frac{1}{4 (1 + 2 \mu) [3]_q^{k}}\right)\right|
\end{equation*}
where
$$ \Phi(\tau) = \frac{ B_1^2(1-\tau)}{4 [(1 + 2 \mu) [3]_q^{k}B_{1}^2
+ (B_{1} - B_{2})(1 + \mu)^2 [2]_q^{2 k}]}.$$
\end{theorem}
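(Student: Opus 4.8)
The plan is to transcribe the argument used for Theorem \ref{t2}, but starting from the coefficient relations already established for the class $\mathcal{F}{\Sigma}_q^{k}(\mu,\phi)$ in the proof of Theorem \ref{thm3.4}. First I would record the relations \eqref{7e3.5}--\eqref{7e3.8}, together with $p_1=-q_1$ from \eqref{7e3.9} and the identity \eqref{7e3.10}. Subtracting \eqref{7e3.8} from \eqref{e3.6} and using $p_1^2=q_1^2$ (so that every $p_1,q_1$ term cancels) leaves
$$2(1+2\mu)[3]_q^{k}\,(a_3-a_2^2)=\tfrac12 B_1(p_2-q_2),$$
that is, the key identity $a_3=a_2^2+\dfrac{B_1(p_2-q_2)}{4(1+2\mu)[3]_q^{k}}$. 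The denominator $4(1+2\mu)[3]_q^{k}$ appearing here, rather than the $8(1+2\lambda)[3]_q^{k}$ of the $\lambda$-class in Theorem \ref{t2}, arises because the coefficient of $a_3$ in \eqref{e3.6} and \eqref{7e3.8} is $(1+2\mu)[3]_q^{k}$ instead of $2(1+2\lambda)[3]_q^{k}$; tracking this factor correctly is the one point of care in the whole computation.

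Next I would write $a_3-\tau a_2^2=(1-\tau)a_2^2+\dfrac{B_1(p_2-q_2)}{4(1+2\mu)[3]_q^{k}}$ and substitute the expression for $a_2^2$ obtained in the proof of Theorem \ref{thm3.4}. Since $a_2^2=\dfrac{B_1^3(p_2+q_2)}{4[(1+2\mu)[3]_q^{k}B_1^2+(B_1-B_2)(1+\mu)^2[2]_q^{2k}]}$, one checks immediately that $(1-\tau)a_2^2=B_1(p_2+q_2)\,\Phi(\tau)$ with $\Phi(\tau)$ exactly as stated in the theorem. Factoring out $B_1$ and collecting the coefficients of $p_2$ and of $q_2$ separately then yields
$$a_3-\tau a_2^2 = B_1\left[\left(\Phi(\tau)+\frac{1}{4(1+2\mu)[3]_q^{k}}\right)p_2+\left(\Phi(\tau)-\frac{1}{4(1+2\mu)[3]_q^{k}}\right)q_2\right].$$

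Finally, since $\phi$ has real Taylor coefficients and $B_1>0$, and since $|p_2|\le 2$ and $|q_2|\le 2$ by Lemma \ref{c7lem1.2}, the triangle inequality applied to the last display gives the asserted bound, in the same shape as the concluding line of the proof of Theorem \ref{t2}. I do not anticipate any genuine obstacle: the statement is a routine analogue of Theorem \ref{t2}, and the only steps demanding attention are the cancellation that produces the clean identity for $a_3-a_2^2$ and the constant $4(1+2\mu)[3]_q^{k}$ in its denominator, which must be carried consistently so that the grouping reproduces $\Phi(\tau)$ precisely.
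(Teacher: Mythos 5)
Your proposal is correct and is precisely the proof the paper intends but omits: the paper states Theorem \ref{t2f} without proof, deferring to the argument of Theorem \ref{t2}, and your transcription---using the relations \eqref{7e3.5}--\eqref{7e3.8}, the cancellation via $p_1=-q_1$, and the correctly tracked constant $4(1+2\mu)[3]_q^{k}$ in place of $8(1+2\lambda)[3]_q^{k}$---matches it step for step, including the identification $(1-\tau)a_2^2=B_1(p_2+q_2)\Phi(\tau)$. One small caveat: the triangle inequality actually yields the sum of moduli $2B_1\left(\left|\Phi(\tau)+\tfrac{1}{4(1+2\mu)[3]_q^{k}}\right|+\left|\Phi(\tau)-\tfrac{1}{4(1+2\mu)[3]_q^{k}}\right|\right)$ rather than the modulus of the sum as literally typeset in the theorem (and in the final display of the paper's proof of Theorem \ref{t2}); this misplaced modulus is a typo of the paper itself, which your argument implicitly corrects.
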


{\bf Concluding Remarks:} Taking $\lambda =0 ~(and ~1)$ in Theorem \ref{t2}, we can state the Fekete-Szeg$\ddot{o}$ inequality for the function class $\mathcal{S}{\Sigma}_q^{k}(\phi). (and ~\mathcal{K}{\Sigma}_q^{k}(\phi)$ respectively. Putting $\mu =0 ~(and ~1)$ in Theorem \ref{t2f}, we can state the Fekete-Szeg$\ddot{o}$ inequality for the function class $\mathcal{H}{\Sigma}_q^{k}(\phi). (and ~\mathcal{P}{\Sigma}_q^{k}(\phi)$ respectively.\\ 
{\bf Future Work:} Making use of the values of $a_2$ and $a_3$, and finding $a_4$ we can caluculate Hankel determinant coefficient for the function classes.


\end{document}